\newtheorem{thm}{Theorem}
\newtheorem{proof of theorem }{Proof of Theorem }
\newtheorem{nota}{Notation}
\newtheorem{defi}{Definition}
\newtheorem{lem}[thm]{Lemma}
\newcommand{\bea}{\begin{eqnarray*}}
\newcommand{\eea}{\end{eqnarray*}}
\newcommand{\be}{\begin{equation}}
\newcommand{\ee}{\end{equation}}
\newcommand{\ben}{\begin{eqnarray*}}
\newcommand{\een}{\end{eqnarray*}}
\date{}
\begin{document}
\title{Further results on the number of cliques in graphs covered by long cycles
\footnote{E-mail addresses:
{\tt mathdzhang@163.com}.}}
\author{\hskip -10mm Leilei Zhang\\
{\hskip -10mm \small Department of Mathematics, East China Normal University, Shanghai 200241, China}}\maketitle
\maketitle
\begin{abstract}
 Let $\Gamma(n,k)$ be the set of $2$-connected $n$-vertex graphs containing an edge that is not on any cycle of length at least $k+1.$ Let $g_s(n,k)$ denote the maximum number of $s$-cliques in a graph in
 $\Gamma(n,k).$ Recently, Ji and Ye [SIAM J. Discrete Math., 37 (2023) 917-924] determined $g_s(n,k).$ They remark that it is interesting to characterize the extremal graphs. In this paper,
 we give such a characterization.
\end{abstract}

{\bf Key words.} Extreme graph; Long Cycle; Clique

{\bf Mathematics Subject Classification.} 05C35, 05C38

\section{ Introduction}
We consider finite simple graphs, and use standard terminology and notation. The {\it order} of a graph is its number of vertices, and the {\it size} its number of edges. Let $e(G)$ denote the size of $G.$ Denote by $V(G)$ and $E(G)$ the vertex set and edge set of a graph $G.$ For graphs we will use equality up to isomorphism, so $G_1=G_2$ means that $G_1$ and $G_2$ are isomorphic. Let $\overline{G}$ denote the complement of $G.$ For two graphs $G$ and $H,$ $G\vee H$ denotes the {\it join} of $G$ and $H,$ which is obtained from the {\it disjoint union} $G+H$ by adding edges joining every vertex of $G$ to every vertex of $H.$  We denote by $kG$ the union of $k$ pairwise vertex disjoint copies of a graph $G.$ We always use $K_n$ to denote the complete graph of order $n.$

For a vertex $v$ in a graph, we denote by $d(v)$ and $N(v)$ the degree of $v$ and the neighborhood of $v$ in $G,$ respectively. For $H\subseteq V(G),$ we denote by $N_H(v)$ the set $H\cap N(v)$ and put $d_H(v)=|N_H(v)|.$ We denote by $\delta(G)$ the {\it minimum degree} of a graph $G.$  For two vertices $u$ and $v$, we use the symbol $u\leftrightarrow v$ to mean that $u$ and $v$ are adjacent and use $u\nleftrightarrow v$ to mean that $u$ and $v$ are non-adjacent. The {\it circumference} $c(G)$ of a graph $G$ is the length of a longest cycle in $G.$ A graph $G$ with order $n$ is {\it hamiltonian} if $c(G)=n,$ otherwise it is {\it nonhamiltonian}. An $s$-clique is a clique of cardinality $s.$ Let $c_e(G)$ be the maximum length of cycles in $G$ containing $e$.

\begin{nota}
For integers $n\ge 3$ and $c\ge 3,$ let $\alpha=\lfloor(n-1)/(c-1)\rfloor$ and $p=n-1-\alpha(c-1).$ Then we have $n-1=\alpha(c-1)+p.$  Let $H_{n,c}$ be a connected graph consisting of $\alpha$ blocks each of which is $K_c$ and a block which is $K_{p+1}$(note that each block of $G$ is $K_c$ when $p=0$). Denote by $h_s(n,c)$ the number of $s$-cliques in $H_{n,c};$ more precisely,
$$
  h_s(n,c)=\alpha\binom{c}{s}+\binom{p+1}{s}.
$$
\end{nota}
Let $G$ be a graph with $n$ vertices whose circumference $c(G)$ is no more
than $c$. In 1959, Erd\H{o}s and Gallai \cite{8} prove that the size of $G$ is no more than $\c(n-1)/2.$ The result is best possible when $n-1$ is divisible by $c-1$, in view of the graph consisting of $(n-1)/(c-1)$ copies of $K_c$ all having exactly one vertex in common. However, Woodall \cite{2} proved that, when $n-1$ is not divisible by $c-1$, the bound $c(n-1)/2$ can be decreased. Caccetta and Vijayan \cite{1} gave an alternative proof of the same result, and in addition, characterize the structure of the extremal graphs.
\begin{thm} (Cacetta and Vijayan \cite{1}, Woodall \cite{2})
Let $G$ be a graph on $n$ vertices and $n-1=\alpha(c-1)+p$, where $c\ge3,$ $\alpha\ge0$ and $0\le p<c-1$. If the circumference of $G$ is no more than $c,$ then
$$
e(G)\le h_2(n,c)
$$
with equality only if (i) $G=H_{n,c}$; (ii) $G$ is a connected graph consisting of $\alpha' (\alpha' <\alpha)$ blocks each of which is $K_c$ and a block which is $K_t\vee \overline{K_{n'-t}}$, with $c=2t$ and $p=c/2$ or $c/2-1$, where $t\ge2$ and $n'=n-\alpha'(c-1)$.
\end{thm}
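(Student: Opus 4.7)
\medskip
\noindent\textbf{Overall plan.} I would argue by strong induction on $n$, with the extremal cases tracked throughout.

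\medskip
\noindent\textbf{Block decomposition.} The base case $n\le c$ gives $\alpha=0$, $p=n-1$, so the bound reduces to $e(G)\le \binom{n}{2}=h_2(n,c)$, with equality iff $G=K_n=H_{n,c}$. For $n>c$, if $G$ is disconnected with components of orders $n_1,\dots,n_k$, then induction and an arithmetic convexity inequality for $h_2$ give $e(G)=\sum_i e(G_i)\le \sum_i h_2(n_i,c)<h_2(n,c)$, so strict inequality holds and we may assume $G$ is connected. If $G$ has a cut vertex $v$, split $G$ at $v$ into connected subgraphs $G_1,G_2$ with $V(G_1)\cap V(G_2)=\{v\}$ and $|V(G_i)|=n_i$, $n_1+n_2=n+1$. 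Each $G_i$ has circumference $\le c$, and inductively $e(G)\le h_2(n_1,c)+h_2(n_2,c)$. Writing $n_i-1=\alpha_i(c-1)+p_i$, a short arithmetic lemma shows this sum is $\le h_2(n,c)$; equality forces one of the $G_i$ to be a vertex-gluing of $K_c$'s at $v$ and the other to be extremal in its own right, and this mechanism is what propagates families (i) and (ii) inductively from smaller orders up to $n$.

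\medskip
\noindent\textbf{The 2-connected case.} Now suppose $G$ is 2-connected and $n\ge c+1$. Let $C=v_1v_2\cdots v_\ell v_1$ be a longest cycle, so $3\le \ell\le c$, and set $R=V(G)\setminus V(C)$, which is nonempty by maximality of $C$. Two standard rotation/extension arguments on longest cycles then yield: every $u\in R$ satisfies $d_C(u)\le \ell/2$, with equality forcing $\ell$ even and $u$'s neighbors equally spaced around $C$; and if $u,u'\in R$ lie in the same component of $G-V(C)$, their $C$-neighborhoods obey a Pos\'a-type spacing condition that in turn restricts chords of $C$ and edges inside $R$. Combining these one derives an upper bound on $e(G)$ and verifies directly that it does not exceed $h_2(n,c)$.

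\medskip
\noindent\textbf{Equality and main obstacle.} In the 2-connected bound, equality forces every $u\in R$ to have exactly $\ell/2$ neighbors on $C$ arranged alternately, $G[V(C)]$ to contain no chord, and $R$ to be independent; together these conditions force $\ell=c=2t$ and identify the block as $K_t\vee \overline{K_{n'-t}}$ for some $t\ge 2$. Inserting this extremal block back into the block-decomposition equation and solving the resulting quadratic in $p$ produces exactly the arithmetic conditions $p=c/2$ or $c/2-1$ in family (ii). I expect the hardest step to be precisely this equality analysis in the 2-connected case: ruling out graphs that almost meet the bound but contain stray chords of $C$ or extra edges inside $R$, and then consistently tracking the parity constraints on $c$ and $p$ as family (ii) is lifted through the inductive block decomposition.
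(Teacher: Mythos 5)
First, note that the paper does not prove this statement at all: Theorem~1 is quoted as a known result of Caccetta--Vijayan \cite{1} and Woodall \cite{2}, so there is no in-paper proof to compare against and your attempt has to stand on its own.

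The block/cut-vertex reduction and the final arithmetic are fine: the convexity argument for $h_2$ under gluing at a cut vertex is exactly the kind of computation the paper itself carries out for $h_s$ in the proof of Theorem~5, and your observation that forcing the 2-connected block $K_t\vee\overline{K_{n'-t}}$ to tie $h_2(n',c)$ reduces to $(p-t)(p-t+1)=0$, i.e.\ $p=c/2$ or $c/2-1$, is correct. The genuine gap is the 2-connected case, which is the entire content of the theorem. You list two standard facts about a longest cycle $C$ (each $u\notin V(C)$ has $d_C(u)\le \ell/2$, plus a P\'osa-type spacing condition) and then assert that ``combining these one derives an upper bound on $e(G)$'' matching $h_2(n,c)$. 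That combination is precisely the hard part: these two facts alone do not control the chords of $C$ or the edges inside and between components of $G-V(C)$ well enough to reach the bound $\max\{f_2(n,c,2),f_2(n,c,\lfloor c/2\rfloor)\}$ (Kopylov/Woodall), and both Woodall's and Caccetta--Vijayan's papers devote most of their length to exactly this step. Moreover, one of your claimed equality conditions is actually false: you say equality forces $G[V(C)]$ to be chordless, but in the extremal block $K_t\vee\overline{K_{n'-t}}$ a longest cycle alternates between the $K_t$ side and the independent side, so $V(C)$ contains all of $K_t$ and $C$ has $\binom{t}{2}-t$ chords. An equality analysis built on that condition would wrongly exclude the very graph appearing in case (ii). As written, the proposal is a plausible plan for the easy reductions wrapped around an unproved (and partly misstated) core.
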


\begin{nota} For integers $n\ge 3$ and $c\ge 3,$ let $\beta=\lfloor(n-2)/(k-2)\rfloor$ and let $q=n-2-\beta(k-2).$ Denote by $X_{n,c}=K_2\vee (\beta K_{k-2}+K_q)$ and let $g_s(n,k)$ be the number of $s$-cliques in $X_{n,k},$ more precisely,
$$
  g_s(n,c)=\begin{cases}
  \beta\binom{k}{2}+\binom{q+2}{2}-\beta, \,\,\,\, {\rm if}\,\,\,s=2;\\
  \beta\binom{k}{s}+\binom{q+2}{s},   \qquad\,\,\, {\rm if}\,\,\,s\ge3.
  \end{cases}
$$
\end{nota}

Let $G$ be a $2$-connected graph on $n$ vertices, and suppose there exists a pair of vertices in $G$ such that the length of the longest path between them is no more than $k$. Fan \cite{3} proved that the size of $G$ is no more than $(k+2)(n-2)/2.$ The result is best possible when $n-2$ is divisible by $k-2.$ Wang and Lv \cite{4} gave a sharpness results of Fan for all possible values $n$. The result of Fan \cite{3} and the result of Wang and Lv \cite{4} can be restated as follows.

\begin{thm} (Fan \cite{3}, Wang and Lv \cite{4})
For integers $n\ge 3$ and $k\ge 3$, let $G$ be a $2$-connected graph on $n$ vertices and suppose that $n-2=\beta(k-2)+q,$ where $\beta\ge 1$ and $0\le q\le k-3.$  If there exists an edge $e$ in $G$ such that $c_e(G)\le k$, then
$$
e(G)\le g_2(n,k)
$$
with equality if and only if (i) $G=X_{n,k}$; or
(ii) $G=K_2\vee(\beta'K_{k-2}+K_{t-1}\vee\overline{K_{n'-t}}),$ with $k=2t-2$ and $q=\frac{k-1}{2}$ or $q=\frac{k-3}{2},$ where $t\ge 3,$ $\beta'< \beta$ and $n'=n-\beta'(k-2).$
\end{thm}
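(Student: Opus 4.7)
The plan is to fix an edge $e=xy$ with $c_e(G)\le k$, decompose $G-\{x,y\}$ into its connected components, bound each component locally using the cycle-length hypothesis (sharpened by Theorem~1 applied to an auxiliary graph), and then optimise over admissible partitions of $n-2$.

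Let $H_1,\ldots,H_m$ be the components of $G-\{x,y\}$, $n_i=|V(H_i)|$ (so $\sum_i n_i=n-2$), and $d_i(v)=|N(v)\cap V(H_i)|$. By $2$-connectivity of $G$, every $H_i$ meets both $N(x)$ and $N(y)$, and
$$
e(G) \;=\; 1 + \sum_{i=1}^m\bigl[e(H_i)+d_i(x)+d_i(y)\bigr].
$$
The key structural input is the local length constraint: for any $u\in N(x)\cap V(H_i)$, $v\in N(y)\cap V(H_i)$, and any $uv$-path $P$ in $H_i$, the cycle $x\,u\,P\,v\,y\,x$ passes through $e$ and has length $|V(P)|+2$, so $|V(P)|\le k-2$.

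To convert this into a sharp per-component bound on $f(H_i):=e(H_i)+d_i(x)+d_i(y)$, I would form the auxiliary graph $\widehat H_i$ on $V(H_i)\cup\{v_x,v_y\}$ by joining $v_x$ to $N(x)\cap V(H_i)$, $v_y$ to $N(y)\cap V(H_i)$, and adding the edge $v_xv_y$. Using the $2$-connectivity of $G$ one checks that $\widehat H_i$ is $2$-connected and that every cycle through $v_xv_y$ has length at most $k$. Applying Theorem~1 to $\widehat H_i$ yields both the coarse bound $f(H_i)\le\binom{n_i+2}{2}-1$ (attained only by $H_i=K_{n_i}$ fully joined to $\{x,y\}$, which forces $n_i\le k-2$) and a refined bound in the exceptional parity range (corresponding to the ``$c=2t$, $p\in\{c/2-1,c/2\}$'' branch of Theorem~1), whose only extremal structure is $H_i\cong K_{t-1}\vee\overline{K_{n'-t}}$ with $\{x,y\}$ fully joined.

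Summing the per-component bounds and invoking strict convexity of $t\mapsto\binom{t+2}{2}$ under $\sum_i n_i=n-2$ gives $e(G)\le g_2(n,k)$. For equality, the optimal partition is either (a) $\beta$ components of size $k-2$ together with one of size $q$, all realising the clique case, which produces (i) $G=X_{n,k}$; or (b) $\beta'<\beta$ clique components of size $k-2$ together with one exceptional component of size $(\beta-\beta')(k-2)+q$ realising the $K_{t-1}\vee\overline{K_{n'-t}}$ case, which produces (ii) — these two give the same total edge count precisely when $k$ and $q$ satisfy the parity relation stated in (ii). The principal obstacle is the auxiliary reduction: one must build $\widehat H_i$ so that it is $2$-connected with the correct ``cycle through $v_xv_y$'' bound, handle the asymmetry between $N(x)\cap V(H_i)$ and $N(y)\cap V(H_i)$, rule out compound extremals in which two distinct components simultaneously fail to be cliques, and translate the exceptional case of Theorem~1 back through the construction with the right vertex-count correspondence and parity bookkeeping between $(c,p,t)$ in Theorem~1 and $(k,q,t)$ in (ii).
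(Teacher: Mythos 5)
First, a remark on the comparison you asked for: the paper does not prove this statement. Theorem 2 is quoted from Fan \cite{3} (the inequality) and Wang and Lv \cite{4} (the characterization of equality), so there is no in-paper proof to measure your plan against; I can only assess it on its own terms.

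There is a genuine gap at the central step. Your auxiliary graph $\widehat H_i$ is indeed $2$-connected, and it does satisfy ``every cycle through $v_xv_y$ has length at most $k$'' --- but that is the hypothesis of Theorem 2 itself, not of Theorem 1. Theorem 1 requires the \emph{circumference} of the whole graph to be at most $c$, and the circumference of $\widehat H_i$ is not bounded by $k$ in general: in the extremal graph of case (ii), the relevant block $K_{t+1}\vee\overline{K_{n'-t}}$ contains cycles longer than $k$ even though every cycle through the distinguished edge is short. So the per-component bound you want is exactly the statement being proved, applied to $\widehat H_i$, and the argument as written is circular. One can repair part of this by induction on $n$: if $G-\{x,y\}$ has $m\ge 2$ components, each $\widehat H_i$ has fewer vertices, the inductive hypothesis gives the per-component bound, and your summation and optimisation over the partition of $n-2$ then go through (this is precisely how Case 1 of the paper's proof of Theorem 6 handles the analogous clique statement, and it is also where the two branches (i) and (ii) of the equality characterization would have to be separated). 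But your plan says nothing about the case $m=1$, where $\{x,y\}$ is not a cut set, $\widehat H_1\cong G$, and the decomposition makes no progress at all. That non-separating case is the entire difficulty of Fan's theorem; his proof disposes of it with a delicate path-system argument, and the clique analogue in this paper (Case 2 of the proof of Theorem 6) needs contraction and edge-switching operations together with Luo's theorem to reduce to the separating case. Without an argument for the case in which no suitable $2$-vertex cut exists, your proof does not close.
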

Generalizing Theorem 1, Ji and Ye \cite{5} proved the following result.

\begin{thm}(Ji and Ye \cite{5})
For integers $n\ge 3$ and $c\ge s\ge 3$, let $G$ be a connected graph on $n$ vertices with the circumference of $G$ is no more than $c,$ then $N(K_s,G)\le h_s(n,c).$
\end{thm}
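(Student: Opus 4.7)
The plan is to induct on $n$. The base case $n \le c$ is immediate: then $\alpha = 0$ and $p = n-1$, so $h_s(n, c) = \binom{n}{s} \ge N(K_s, G)$. For the inductive step, assume $n > c$ and split on whether $G$ is $2$-connected.

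If $G$ has a cut vertex $v$, pick a component $C$ of $G - v$ and set $G_1 = G[C \cup \{v\}]$, $G_2 = G[(V(G) \setminus C) \cup \{v\}]$. Both $G_i$ are connected, have circumference at most $c$, and $V(G_1) \cap V(G_2) = \{v\}$, so $n_1 + n_2 = n + 1$ with $n_i := |V(G_i)| < n$. Since every $s$-clique with $s \ge 2$ lies in a single block of $G$ and therefore entirely in one of the $G_i$, we have $N(K_s, G) = N(K_s, G_1) + N(K_s, G_2)$. Applying the inductive hypothesis reduces the goal to the superadditivity
\[
h_s(n_1, c) + h_s(n_2, c) \le h_s(n_1 + n_2 - 1, c).
\]
Writing $n_i - 1 = \alpha_i(c-1) + p_i$ with $0 \le p_i < c - 1$, this splits into two subcases. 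If $p_1 + p_2 < c - 1$, the inequality reduces to $\binom{p_1+1}{s} + \binom{p_2+1}{s} \le \binom{p_1+p_2+1}{s}$, which holds for $s \ge 2$ because the left side counts $s$-subsets of two sets of sizes $p_1+1$ and $p_2+1$ sharing exactly one element. If $p_1 + p_2 \ge c - 1$, setting $a = p_1 + 1$ and $b = p_2 + 1$ reduces it to $\binom{a}{s} + \binom{b}{s} \le \binom{c}{s} + \binom{a+b-c}{s}$ with $a, b \le c - 1$ and $a + b \ge c + 1$; this follows from the convexity of $j \mapsto \binom{j}{s}$ via a telescoping argument using $\binom{j}{s} - \binom{j-1}{s} = \binom{j-1}{s-1}$.

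If $G$ is $2$-connected with $n > c$, I would invoke the Kopylov-type clique extension due to Luo: $N(K_s, G)$ is dominated by the $s$-clique count of a graph of the form $K_a \vee (\overline{K_{n-c+a-1}} + K_{c-2a+1})$ for some $a \in \{2, \lfloor c/2 \rfloor\}$, and a direct comparison then shows this bound is at most $h_s(n, c)$, because the book $H_{n, c}$ packs more $s$-cliques than any $2$-connected graph on $n > c$ vertices with the same circumference bound. This $2$-connected subcase is the main obstacle: the cut-vertex reduction and the superadditivity identities are elementary, but the base bound for $2$-connected graphs on more than $c$ vertices requires either Luo's theorem as a black box or an independent Kopylov-style longest-cycle argument with Erd\H{o}s--Gallai-type degree constraints on off-cycle vertices.
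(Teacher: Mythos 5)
The paper never proves this statement itself---it is quoted from Ji and Ye \cite{5}---but your outline coincides with the skeleton the present paper uses for its refinement in Theorem 5: induct on $n$, split off a cut vertex and use superadditivity of $h_s$, and dispose of the $2$-connected case with Luo's bound. Your base case and cut-vertex step are sound: the $s$-clique count is additive over the two pieces $G_1,G_2$, and both superadditivity inequalities for $\binom{p_1+1}{s}+\binom{p_2+1}{s}$ hold by the union-counting and convexity arguments you give (only note that when $n=c$ one has $\alpha=1,p=0$ rather than $\alpha=0,p=n-1$, though the value $\binom{n}{s}$ is unchanged).

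The genuine gap is in the $2$-connected case. After invoking Luo's theorem you assert that ``a direct comparison'' yields $\max\{f_s(n,c,2),\,f_s(n,c,\lfloor c/2\rfloor)\}\le h_s(n,c)$, justified by the remark that $H_{n,c}$ ``packs more $s$-cliques than any $2$-connected graph''---but that is precisely the statement to be proved, so as written the step is circular. This comparison is not direct: it is exactly Lemma 8 of the present paper, whose proof takes over a page and requires the nontrivial binomial inequalities $\binom{t}{s}+(2t-1)\binom{t}{s-1}<\binom{2t}{s}$ (for $c=2t$) and $\binom{t+2}{s}+2t\binom{t}{s-1}<\binom{2t+1}{s}$ (for $c=2t+1$), followed by a further case split on the residue of $n-t$ (resp.\ $n-t-2$) modulo $c-1$ in order to reassemble $h_s(n,c)$. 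You correctly flag this subcase as the main obstacle, but you should either carry out this comparison or cite it explicitly rather than call it direct. A secondary, easily repaired point: Luo's theorem as stated applies to graphs of circumference exactly $c$ with $n>c\ge 4$, so you should apply it with the true circumference $c'\le c$ and then use monotonicity of $h_s(n,\cdot)$ in its second argument, observing also that $c'=3$ cannot occur for a $2$-connected graph on more than three vertices.
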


Note that if a $2$-connected graph $G$ contains an $s$-clique, then for every edge $e\in E(G),$ we have $c_e(G)\ge s.$ Generalizing Theorem 2, Ji and Ye \cite{5} proved the following theorem, which completely resolves a conjecture of Ma and Yuan \cite{9}.

\begin{thm}(Ji and Ye \cite{5})
For integers $n\ge 3$, $k\ge s\ge 3$, let $G$ be a $2$-connected graph on $n$ vertices. If there exists an edge $e$ in $G$ such that $c_e(G)\le k$, then $N(K_s,G)\le g_s(n,k).$
\end{thm}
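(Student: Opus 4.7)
The plan is strong induction on $n$, guided by the structure of the extremal graph $X_{n,k}=K_2\vee(\beta K_{k-2}+K_q)$: the endpoints $\{u,v\}$ of the special edge $e$ play the role of a universal 2-vertex cut in the extremal configuration, and the components of $X_{n,k}-\{u,v\}$ are cliques of size at most $k-2$ (with one possible leftover of size $q$). The base case $n\le k$ is immediate since $g_s(n,k)=\binom{n}{s}$ in that regime. For the inductive step I split on whether $\{u,v\}$ is a vertex cut of $G$.

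If $G-\{u,v\}$ is disconnected, with components $C_1,\dots,C_t$, set $G_i:=G[V(C_i)\cup\{u,v\}]$. Each $G_i$ is 2-connected, contains $e$, and inherits $c_e(G_i)\le k$. Because every $s$-clique with $s\ge 3$ contains at least one vertex outside $\{u,v\}$, it lies in exactly one lobe, so
\[N(K_s,G)=\sum_{i=1}^{t} N(K_s,G_i)\le \sum_{i=1}^{t} g_s(n_i,k)\]
by the induction hypothesis, where $n_i=|V(G_i)|$. It then remains to verify the superadditivity inequality $\sum_i g_s(n_i,k)\le g_s(n,k)$ subject to $\sum_i(n_i-2)=n-2$: when the remainders $q_i$ of the pieces satisfy $\sum q_i\le k-3$ one uses $\binom{q_1+2}{s}+\binom{q_2+2}{s}\le\binom{q_1+q_2+2}{s}$ inductively, and otherwise the remainders ``roll over'' into an extra $\binom{k}{s}$ block, which only helps.

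The hard case is when $G-\{u,v\}$ is connected. Here I would fix a longest cycle $C$ through $e$, of length $\ell\le k$, and exploit its maximality via P\'osa-type rotations to constrain the attachment of $H:=V(G)\setminus V(C)$ to $V(C)$: no vertex of $H$ may send ``crossing'' neighbors to $V(C)$ without producing a longer cycle through $e$. My goal would be to find a vertex $w\in V(G)\setminus\{u,v\}$ whose deletion keeps $G-w$ 2-connected (preserving the $c_e\le k$ hypothesis), so that induction gives $N(K_s,G-w)\le g_s(n-1,k)$, and for which the cliques through $w$, numbering $N(K_{s-1},G[N(w)])$, do not exceed $g_s(n,k)-g_s(n-1,k)$, which equals $\binom{q+1}{s-1}$ when $q\ge 1$ and $\binom{k-1}{s-1}$ when $q=0$. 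The heuristic reason this should hold is that a large $(s-1)$-clique in $N(w)$ combined with two internally disjoint $w$-to-$\{u,v\}$ paths (from 2-connectivity) would produce a cycle through $e$ exceeding $k$.

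The main obstacle is precisely this last step: simultaneously choosing $w$ so that $G-w$ remains 2-connected with the hypothesis intact, and the local clique count through $w$ respects the tight bound appropriate to the residue $q$. When no such peelable vertex exists — a scenario forced only by configurations close to the extremal graph itself — a Kopylov-style weight-shift or a direct appeal to Theorem 3 applied to an auxiliary connected subgraph (such as $G-u$ or the graph obtained by contracting $e$) should be used to finish, and justifying the necessary structural lemma is the technical heart of the argument.
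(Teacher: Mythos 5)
First, a framing point: the paper does not actually prove this statement --- it is Theorem 4, quoted from Ji and Ye \cite{5} --- so the only internal benchmark is the proof of Theorem 6, which recycles the machinery of \cite{5} (contraction $G/ux$, edge-switching $G[x\rightarrow u]$, the extremal parameter $\xi(G)$, and the dominating-vertex reduction to Theorem 3). Your first case, where the endpoints $\{u,v\}$ of $e$ form a $2$-vertex cut, is sound and coincides with Case 1 of that proof: each lobe $G_i=G[V(C_i)\cup\{u,v\}]$ is $2$-connected because $uv\in E(G)$, it inherits $c_e(G_i)\le k$ as a subgraph containing $e$, every $s$-clique with $s\ge 3$ lies in exactly one lobe, and the superadditivity of $g_s(\cdot,k)$ over the decomposition $\sum_i(n_i-2)=n-2$ closes the induction there.

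The genuine gap is the case where $G-\{u,v\}$ is connected, and you essentially concede it. Your peeling plan has two unresolved problems. (i) There is no reason a vertex $w$ exists with $G-w$ still $2$-connected and $N(K_{s-1},G[N(w)])\le g_s(n,k)-g_s(n-1,k)$: in $X_{n,k}$ with $q=0$ every vertex outside $\{u,v\}$ meets this bound with equality, so any argument must survive tightness, and your heuristic that a large clique in $N(w)$ plus two disjoint $w$--$\{u,v\}$ paths yields a long cycle through $e$ does not force the cycle to use the \emph{edge} $uv$ rather than merely the vertices $u$ and $v$. (ii) ``A Kopylov-style weight-shift should be used to finish'' is a placeholder, not an argument. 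The route that actually works (Subcases 2.1 and 2.2 of the paper's proof of Theorem 6, following \cite{5}) is: among putative counterexamples maximize $\xi(G)$, the largest degree of an endpoint of an edge $e$ with $c_e(G)\le k$; given a neighbor $x$ of $u$ off $v$ with $\{u,x\}$ not a cut, either contract $ux$ (if $N(u)\cap N(x)=\emptyset$) or switch to $G[x\rightarrow u]$ (otherwise) --- both operations preserve $2$-connectivity and $c_{uv}\le k$ and do not decrease $N_s$ --- until either a $2$-cut spanned by an edge appears (your Case 1) or $u$ becomes dominating, at which point $c(G-u)\le k-1$ and Theorem 3 applied to $G-u$ gives $N_s(G)=N_s(G-u)+N_{s-1}(G-u)\le \alpha\binom{k-1}{s}+\binom{p+1}{s}+\alpha\binom{k-1}{s-1}+\binom{p+1}{s-1}=g_s(n,k)$. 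You correctly identify $G-u$ and Theorem 3 as the endgame, but the switching/contraction mechanism that forces one of these two terminal configurations is the missing idea, and without it the induction does not close.
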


Let $G$  be defined as Theorem 4. Ji and Ye \cite{5} remark that it is interesting to seek a characterization of the extremal graphs in Theorem 4. Inspired by these, we characterize the graphs $G$ that have the maximum number of $s$-cliques. The following two definitions will be used to describe our main results.

\begin{defi}
Let $G$ be a connected graph with $n$ vertices and the circumference $c(G)$ is no more than $c.$ The graph $L(G,c)$ is called an edge maximal graph with respect to $G$ and $c$ if for any $e\in E(\overline{L(G,c)}),$ $c(L(G,c)+e)>c.$
\end{defi}

\begin{defi}
Let $G$ be a $2$-connected graph on $n$ vertices and suppose $G$ contains an edge $uv$ with $c_{uv}(G)\le k.$ The graph $M(G,uv)$ is called an edge maximal graph with respect to $G$ and $uv$ if for any $e\in E(\overline{M(G,uv)}),$ $c_{{uv}}(M(G,uv)+e)> k.$
\end{defi}

The following are the main results of these paper.

\begin{thm}
For integers $n\ge 3$ and $c\ge s\ge 3,$ let $G$ be a connected $n$-vertex graph  with $c(G)\le c$ and let $\alpha=\lfloor(n-1)/(c-1)\rfloor.$ If $N_s(G)=h_s(n,c)$, then for $s \leq p+1,$ $G$ is isomorphic to the graph $H_{n,c}$. Otherwise, $L(G,c)$ is a connected graph composed of $\alpha$ blocks, each of which is $K_c$, while the remaining blocks are complete graphs.
\end{thm}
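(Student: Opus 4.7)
Plan.

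The plan is to pass to the edge-closure $L:=L(G,c)$ and analyze its block decomposition. Since $L\supseteq G$ and $c(L)\le c$, monotonicity of $N_s$ together with Theorem~3 forces $N_s(L)=h_s(n,c)$. Let $B_1,\dots,B_r$ be the blocks of $L$ with $|V(B_i)|=n_i$, so $\sum_i(n_i-1)=n-1$. Because $s\ge 3$, every $s$-clique is contained in a single block, hence applying Theorem~3 block-wise gives
\[
h_s(n,c)=\sum_i N_s(B_i)\le\sum_i h_s(n_i,c).
\]

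The first ingredient is a super-additivity lemma for $h_s$: writing $a-1=\alpha_1(c-1)+p_1$ and $b-1=\alpha_2(c-1)+p_2$ with $0\le p_j<c-1$, a Vandermonde expansion of $\binom{p_1+p_2+1}{s}-\binom{p_1+1}{s}-\binom{p_2+1}{s}$ into a sum of non-negative binomial products gives $h_s(a+b-1,c)\ge h_s(a,c)+h_s(b,c)$, with the equality case controlled by $p_1,p_2$. Iterating and using our equality $\sum_i h_s(n_i,c)=h_s(n,c)$, each $B_i$ must be extremal for Theorem~3 on $n_i$ vertices, and the residues $p_i=(n_i-1)\bmod(c-1)$ are constrained: for $s\le p+1$, exactly one $p_i$ is non-zero and equals $p$; for $s>p+1$, only $\sum_i p_i=p$ is required, together with each $p_i<s-1$.

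The second step is to show every block $B_i$ is complete and has $n_i\le c$. If $B_i$ has a non-edge $uv$, edge-maximality of $L$ yields a cycle of length $\ge c+1$ in $L+uv$ through $uv$, so there is a simple $u$-$v$ path of length $\ge c$ in $L$; since $u,v\in V(B_i)$ and a simple path connecting two vertices of a block stays within the block, we force $n_i\ge c+1$. Thus every block of size at most $c$ is complete. The main obstacle is to rule out blocks with $n_i>c$. For such a $B_i$, 2-connectivity together with $c(B_i)\le c$ prohibits $B_i$ from containing $K_c$ as a subgraph: by Menger's theorem, any outside vertex would admit two internally disjoint paths entering $K_c$ at distinct vertices, which combined with a Hamiltonian path in $K_c$ closes a cycle of length $\ge c+1$, a contradiction. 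Hence $\omega(B_i)\le c-1$; combining this bound with Theorem~4 applied to $B_i$ (every edge of $B_i$ has $c_e\le c$) and the observation that the $g_s$-extremal $X_{n_i,c}$ has circumference greater than $c$, a case analysis comparing $g_s$ with $h_s$ is planned to yield the strict inequality $N_s(B_i)<h_s(n_i,c)$, which contradicts the equality $N_s(B_i)=h_s(n_i,c)$ forced above.

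With all blocks being complete graphs $K_{n_i}$ of size at most $c$, the residue analysis pins down the block sizes: exactly $\alpha$ blocks of size $c$, together with additional complete blocks $K_{m_j}$ satisfying $\sum_j(m_j-1)=p$ and $\sum_j\binom{m_j}{s}=\binom{p+1}{s}$. For $s\le p+1$, strict convexity of $\binom{\cdot}{s}$ forces a single extra block of size $p+1$, giving $L=H_{n,c}$; a short direct check shows that $H_{n,c}$ is itself edge-maximal (any non-edge joins two distinct complete blocks and creates a cycle of length $\ge c+1$), so $G=L=H_{n,c}$. For $s>p+1$ the equality $\binom{p+1}{s}=0$ only requires $m_j<s$ for each remaining block, matching the claimed description of $L(G,c)$.
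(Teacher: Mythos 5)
Your overall architecture (pass to the edge-closure, decompose into blocks, use superadditivity of $h_s$ to force each block to be extremal, and use edge-maximality to show blocks of order at most $c$ are complete) runs parallel to the paper's cut-vertex induction and is sound as far as it goes. But the decisive step --- ruling out a $2$-connected block $B_i$ with $n_i>c$ --- does not work as you describe. You propose to apply Theorem~4 to $B_i$ (every edge $e$ of $B_i$ has $c_e(B_i)\le c$) and then run ``a case analysis comparing $g_s$ with $h_s$.'' This comparison goes the wrong way: writing $n_i-2=\beta(c-2)+q$ and $n_i-1=\alpha(c-1)+p$, one has $\beta>\alpha$ once $n_i$ is moderately large, so $g_s(n_i,c)>h_s(n_i,c)$ (for instance $g_3(100,5)=324$ while $h_3(100,5)=244$). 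Hence Theorem~4 yields a bound that is \emph{weaker} than the one you need, and your two auxiliary observations --- that $c(X_{n_i,c})=2c-2>c$ and that $\omega(B_i)\le c-1$ (your Menger argument for the latter is fine) --- only show that $B_i$ is not extremal for Theorem~4; without a stability version of that theorem this cannot be converted into the required strict inequality $N_s(B_i)<h_s(n_i,c)$.

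The paper closes exactly this gap with Luo's theorem (Theorem~7): a $2$-connected graph of order $n>c$ with circumference at most $c$ satisfies $N_s\le\max\{f_s(n,c,2),\,f_s(n,c,\lfloor c/2\rfloor)\}$, and Lemma~8 --- the bulk of the paper's technical work --- shows this maximum is strictly below $h_s(n,c)$. You need this ingredient, or an equivalent clique-counting bound for $2$-connected graphs of bounded circumference; Theorem~4 is not a substitute. A secondary, fixable slip: for $s\le p+1$ you deduce $G=L=H_{n,c}$ from the edge-maximality of $H_{n,c}$, but edge-maximality concerns adding edges and says nothing about whether $G$ is a proper subgraph of $L$. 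The correct argument is that when $s\le p+1$ every edge of $H_{n,c}$ lies in some $s$-clique, so deleting any edge strictly decreases $N_s$, and $N_s(G)=N_s(L)$ then forces $G=L$.
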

\begin{thm}
For integers $n\ge 3$ and $k\ge s\ge 3$, let $G$ be a $2$-connected graph on $n$ vertices and suppose $G$ has an edge $e$ with $c_e(G)\le k.$ If $N_s(G)=g_s(n,k),$ then $M(G,e)=X_{n,k}.$
\end{thm}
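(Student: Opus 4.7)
The plan is to first reduce to the edge-maximal case and then to analyze the structure via three sequential claims. Set $G_0 := M(G, uv)$. Since $G_0 \supseteq G$, we have $N_s(G_0) \ge N_s(G) = g_s(n, k)$; since $G_0$ is a 2-connected $n$-vertex graph containing the edge $uv$ with $c_{uv}(G_0) \le k$, Theorem 4 gives the matching upper bound $N_s(G_0) \le g_s(n, k)$. Hence $N_s(G_0) = g_s(n, k)$, and the task reduces to showing that any edge-maximal $G_0$ satisfying these constraints must equal $X_{n, k}$. I aim to establish: \textbf{(A)} $d_{G_0}(u) = d_{G_0}(v) = n - 1$; \textbf{(B)} each connected component of $G_0 - \{u, v\}$ is a clique; and \textbf{(C)} the orders of these components are $\beta$ copies of $k - 2$ together with one copy of $q$. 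Together (A)--(C) yield $G_0 = K_2 \vee (\beta K_{k-2} + K_q) = X_{n, k}$.

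Granting (A), both (B) and (C) are clean. Assuming $u, v$ are universal, $\{u, v\}$ is a vertex cut of $G_0$, so every $uv$-cycle is confined to $\{u, v\} \cup C_j$ for some component $C_j$ of $G_0 - \{u, v\}$, and hence has length at most $|C_j| + 2$; combined with $c_{uv}(G_0) \le k$, this forces $|C_j| \le k - 2$ for all $j$. For (B), if $x, y$ were non-adjacent in a common component $C_j$, then adding $xy$ would preserve the $\{u, v\}$-cut structure and leave the maximum $uv$-cycle length at most $|C_j| + 2 \le k$, contradicting the edge-maximality of $G_0$. Hence each $C_j$ is a clique. For (C), since no $s$-clique can cross two components (vertices in distinct $C_i, C_j$ are non-adjacent in $G_0 - \{u, v\}$), one obtains $N_s(G_0) = \sum_i \binom{|C_i| + 2}{s}$ for $s \ge 3$; the strict convexity of $x \mapsto \binom{x + 2}{s}$ together with $\sum |C_i| = n - 2$ and $|C_i| \le k - 2$ uniquely maximizes this sum at the configuration with $\beta$ copies of $k - 2$ and one copy of $q$, which is precisely the $X_{n,k}$ distribution.

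The principal obstacle is therefore (A): edge-maximality alone does not guarantee universality of $u$ and $v$ (small 2-connected counterexamples exist), so the hypothesis $N_s(G_0) = g_s(n, k)$ must genuinely enter. My proposed route is through the edge-analogue Theorem 2. One uses a double-counting of $s$-cliques per edge to argue that $N_s(G_0) = g_s(n, k)$ forces the edge count to meet its own upper bound, that is $e(G_0) = g_2(n, k)$. Theorem 2 then restricts $G_0$ to exactly two families: either $G_0 = X_{n, k}$ (whereupon (A) holds immediately), or $G_0$ is the sporadic graph appearing in Theorem 2. A direct computation of $N_s$ for this sporadic family shows that it is strictly less than $g_s(n, k)$ for $s \ge 3$, because the independent part of its non-clique block contributes no $s$-cliques internally, so the clique-rich blocks of the sporadic graph are strictly less clique-rich than $\beta$ copies of $K_{k-2}$. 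This rules out the sporadic case, concluding $G_0 = X_{n, k}$. The technical heart is therefore the double-counting step $N_s(G_0) = g_s(n, k) \Rightarrow e(G_0) = g_2(n, k)$, which I expect requires a per-edge bound on $(s - 2)$-cliques in common neighborhoods sensitive to the local cycle-length constraint imposed by $c_{uv}(G_0) \le k$.
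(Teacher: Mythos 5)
Your reduction to the edge-maximal graph $G_0=M(G,uv)$ is fine, and (B) and (C) do follow once (A) is known (though for (C) you need edge-maximality in addition to convexity: $\binom{x+2}{s}$ vanishes for $x<s-2$, so convexity alone does not distinguish configurations differing only in components of size less than $s-2$; what saves you is that two components of total size at most $k-2$ can always be joined by an edge without pushing $c_{uv}$ above $k$). The genuine gap is (A), and specifically the implication $N_s(G_0)=g_s(n,k)\Rightarrow e(G_0)=g_2(n,k)$, which you flag as the technical heart but for which you give no argument. A double count of the form $\binom{s}{2}N_s(G_0)=\sum_{e\in E(G_0)}(\text{number of $s$-cliques containing } e)$ cannot be closed by any uniform per-edge bound: in $X_{n,k}$ the edge $uv$ lies in $\beta\binom{k-2}{s-2}+\binom{q}{s-2}$ of the $s$-cliques while an edge inside a $K_{k-2}$-block lies in only $\binom{k-2}{s-2}$, and when $s>q+2$ the edges meeting the $K_q$-block lie in no $s$-clique at all, so $N_s$ carries no information about whether those edges are present. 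Any per-edge bound sharp enough to force $e(G_0)=g_2(n,k)$ would have to already distinguish the edges incident to the two dominating vertices, which is precisely the structure you are trying to establish; as stated, the lemma you need is essentially equivalent to the theorem itself. (The remaining steps --- invoking Theorem 2 and ruling out its sporadic family by a computation in the spirit of Lemma 8 --- would be fine if the edge count were in hand.)

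For comparison, the paper avoids this entirely and argues by induction on $n$: if $G$ has a $2$-vertex cut $\{x,y\}$ with $xy\in E(G)$ it decomposes along the cut, bounds $g_s(n_1,k)+g_s(n_2,k)$ by a convexity estimate, and applies the inductive hypothesis to each piece; otherwise it chooses the extremal edge $uv$ maximizing an endpoint degree and uses the contraction $G/uu_1$ or the edge-switching $G[u_1\to u]$ (which preserve $2$-connectivity, do not decrease $N_s$, and do not increase $c_{uv}$) to reach a graph handled by induction or by the first case, after which Lemmas 9 and 10 transfer the adjacent $2$-cut back to $G$ and yield a contradiction. To salvage your outline you would need an independent proof of the edge-count step, and I do not see one that avoids this kind of structural analysis.
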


\section{Proof of the main results}
 We will need the following notations and lemmas.
\begin{nota}
Fix $n-1 \geq c\geq 2k\ge 4.$ Let $F(n,c,k)=K_k\vee(K_{c+1-2k}+\overline{K_{n-c-1+k}})$.
Denote by $f_s(n,c,k)$ the number of $s$-cliques in $F(n,c,k)$; more precisely,
$$
  f_s(n,c,k)=\binom{c+1-k}{s}+(n-c-1+k)\binom{k}{s-1}.
$$
\end{nota}
In \cite{7}, Luo determined the maximum number of $s$-cliques in a $2$-connected nonhamiltonian graph of order $n.$

\begin{thm}\label{luo} (Luo \cite{7})
Let $n>c\ge 4,$ if $G$ be a $2$-connected  graph of order $n$ with circumference $c,$ then $N_s(G)\le {\rm max}\{f_s(n,c,2),f_s(n,c,\lfloor c/2 \rfloor)\}.$
\end{thm}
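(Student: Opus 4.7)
The plan is to follow the Kopylov-style strategy underlying the $s=2$ (edge) version and lift it to $s$-cliques. Fix a longest cycle $C=v_1v_2\cdots v_cv_1$ of length $c$, and set $R=V(G)\setminus V(C)$. Since $n>c$ we have $R\ne\emptyset$, and by $2$-connectedness every $u\in R$ is attached to $C$ through at least two internally disjoint paths. I split the $s$-cliques of $G$ into two families: (i) those entirely contained in $V(C)$, and (ii) those meeting $R$. Bounding (i) is essentially a cycle-structure count; the real work is in (ii).

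For family (ii) the essential tool is a Chv\'atal--Erd\H os/Kopylov ``disk'' argument derived from the extremality of $C$. Concretely, if $u\in R$ and $v_i,v_j\in N_C(u)$ then $v_{i+1}v_{j+1}\notin E(G)$ and $v_{i-1}v_{j-1}\notin E(G)$, because either edge combined with $u$ together with subpaths of $C$ yields a cycle of length $>c$. Iterating this (and extending it through $u$-$w$ paths inside $R$) produces a subset $H\subseteq V(C)$ of size $k$ with $2\le k\le \lfloor c/2\rfloor$ having the following properties: every vertex of $R$ satisfies $N_C(u)\subseteq H$; the induced graph $G[V(C)\setminus H]$ together with $H$ yields at most $\binom{c+1-k}{s}$ cliques of size $s$ inside $V(C)$; and any $s$-clique meeting $R$ uses exactly one vertex of $R$ together with $s-1$ vertices of $H$. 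Adding these up gives
$$N_s(G)\le \binom{c+1-k}{s}+(n-c-1+k)\binom{k}{s-1}=f_s(n,c,k).$$

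It remains to verify that this upper bound, as a function of the integer parameter $k\in\{2,\ldots,\lfloor c/2\rfloor\}$, is maximised at one of the endpoints. A direct calculation using Pascal's identity shows that the second finite difference of $k\mapsto f_s(n,c,k)$ is nonnegative on this range (the $(n-c-1+k)\binom{k}{s-1}$ term is convex in $k$ and dominates the contribution from $\binom{c+1-k}{s}$ once $n>c$), so the maximum is attained at $k=2$ or $k=\lfloor c/2\rfloor$, yielding the stated bound.

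The principal obstacle is the structural reduction producing the set $H$ with the clean clique-count properties. When $R$ has several components attached to $C$ through different ``local disks'', one has to amalgamate them into a single $H$ that simultaneously bounds the neighborhoods of all of them; this amalgamation relies on Chv\'atal--Erd\H os chord arguments applied across components of $G-V(C)$ and on cycle-splicing moves that extend $C$ unless the relevant $C$-neighborhoods coincide. Handling the boundary case in which the disk lemma forces $k=\lfloor c/2\rfloor$ (and confirming there that the $\binom{k}{s-1}$ count is actually achieved vertex by vertex in $R$) is the most delicate part; once the global $H$ is in place, the convexity step reduces to a routine verification.
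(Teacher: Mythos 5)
This is Theorem~7 of the paper, which is quoted from Luo's article without proof, so there is no in-paper argument to compare against; measured against Luo's actual proof, your sketch has a genuine gap at its core. The structural reduction you rely on --- a single set $H\subseteq V(C)$ with $|H|=k\le\lfloor c/2\rfloor$ such that every $u\in R$ has $N_C(u)\subseteq H$, every $s$-clique meeting $R$ consists of one vertex of $R$ together with $s-1$ vertices of $H$, and the $s$-cliques inside $V(C)$ number at most $\binom{c+1-k}{s}$ --- is not available in general. Distinct vertices of $R$ can attach to disjoint arcs of $C$, components of $G-V(C)$ can induce edges and larger cliques, and even in the extremal graph $F(n,c,\lfloor c/2\rfloor)$ with $c$ odd a longest cycle must use vertices outside the dominating $K_k$, so the $s$-cliques contained in $V(C)$ already exceed $\binom{c+1-k}{s}$ there; consequently your two partial bounds do not sum to $f_s(n,c,k)$. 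The crossing-chord observations ($v_{i+1}v_{j+1}\notin E(G)$, etc.) are correct but far too weak to amalgamate all attachments into one global disk $H$, and you acknowledge this amalgamation as the ``principal obstacle'' without resolving it.

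Luo's proof (following Kopylov) takes a different route entirely: fix a parameter $t$, perform the $t$-disintegration of $G$ by repeatedly deleting vertices of current degree at most $t$, charge to each deleted vertex the at most $\binom{t}{s-1}$ $s$-cliques through it at the moment of deletion, and then invoke Kopylov's lemma, which says that if the resulting $(t+1)$-core is nonempty then the circumference of the $2$-connected graph $G$ exceeds $c$ for the relevant choices of $t$; the two terms of $f_s(n,c,k)$ arise from the deleted vertices and from the surviving core, not from a cycle/non-cycle partition. The final step of your sketch --- that $k\mapsto f_s(n,c,k)$ is convex on $\{2,\dots,\lfloor c/2\rfloor\}$, so its maximum is attained at an endpoint --- is correct and is indeed how the two candidate bounds in the statement arise, but the heart of the theorem is the disintegration and core argument, which your outline does not supply.
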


\begin{lem}
Let $n>c\ge 4$ and $c\ge s\ge 3,$ then ${\rm max}\{f_s(n,c,2),f_s(n,c,\lfloor c/2 \rfloor)\}<h_s(n,c).$
\end{lem}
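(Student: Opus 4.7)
The plan is to split into the two cases $k = 2$ and $k = \lfloor c/2 \rfloor$ arising from Luo's theorem, and in each case substitute the identity $n = \alpha(c-1) + p + 1$ to decompose $h_s(n,c) - f_s(n,c,k)$ as a sum of explicitly nonnegative pieces.

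For $k = 2$, one has $f_s(n,c,2) = \binom{c-1}{s} + (n-c+1)\binom{2}{s-1}$. The case $s \geq 4$ is immediate since $\binom{2}{s-1} = 0$ gives $f_s(n,c,2) = \binom{c-1}{s} < \binom{c}{s} \leq h_s(n,c)$, using $\alpha \geq 1$ (which follows from $n > c$). For $s = 3$, the substitution together with $\binom{c}{3} - \binom{c-1}{3} = \binom{c-1}{2}$ rewrites the difference as
$$
h_3(n,c) - f_3(n,c,2) = (\alpha-1)\bigl[\tbinom{c}{3} - (c-1)\bigr] + \tbinom{c-1}{2} + \tbinom{p+1}{3} - p - 1,
$$
and positivity follows since $\binom{c-1}{2} \geq 3$ and $\binom{c}{3} \geq c - 1$ for $c \geq 4$, while $\binom{p+1}{3} - p - 1 \geq -2$ uniformly on $0 \leq p \leq c-2$.

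For $k = m := \lfloor c/2 \rfloor$, write $c = 2m + \epsilon$ with $\epsilon \in \{0,1\}$. Substituting yields the decomposition
$$
h_s(n,c) - f_s(n,c,m) = (\alpha - 1) X + g(p),
$$
where $X := \binom{c}{s} - (c-1)\binom{m}{s-1}$ and $g(p) := \binom{c}{s} - \binom{m+1+\epsilon}{s} + \binom{p+1}{s} - (p+m-1)\binom{m}{s-1}$ depends only on $p$. The discrete difference $g(p) - g(p-1) = \binom{p}{s-1} - \binom{m}{s-1}$ shows that $g$ is unimodal on $[0, c-2]$ with minimum at $p = m$ (in range since $m \leq c/2 \leq c-2$ for $c \geq 4$). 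A short Pascal-identity computation reduces $g(m)$ to $X$ when $c = 2m$, and to $X - \binom{m}{s-2}$ when $c = 2m+1$. Thus the whole problem reduces to showing $X > \binom{m}{s-2}$ for odd $c$, and $X \geq 0$ for even $c$ (the latter also needed so that $(\alpha-1)X$ contributes nonnegatively).

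The main obstacle is this binomial inequality. I would attack it via the product formula $\binom{c}{s}/\binom{m}{s-1} = (c/s) \prod_{i=1}^{s-1}(c-i)/(m-i+1)$: each factor with index $i \geq 2$ is at least $2$ because $c \geq 2m$ forces $c - i \geq 2(m-i+1)$, while the $i = 1$ factor equals $(c-1)/m \geq 2 - 1/m$. This yields $\binom{c}{s}/\binom{m}{s-1} \geq (c-1)\cdot 2^{s-1}/s$ for even $c$, and the strictly better $c\cdot 2^{s-1}/s$ for odd $c$; together with $2^{s-1}/s > 1$ for $s \geq 3$, these bounds give the desired inequality with room to absorb the correction $\binom{m}{s-2}$ in the odd case. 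Degenerate cases where $m < s - 1$ (so $\binom{m}{s-1} = 0$) are handled directly: $f_s = \binom{m+1+\epsilon}{s} \leq \binom{c-1}{s} < \binom{c}{s} \leq h_s(n,c)$.
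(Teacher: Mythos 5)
Your proposal is correct in substance, and for the main case it takes a genuinely different route from the paper. The $k=2$ part is essentially identical to the paper's (same $s\ge 4$ dismissal, same expansion for $s=3$). For $k=\lfloor c/2\rfloor$ the paper instead proves the stronger ``one more block'' inequalities $\binom{t}{s}+(2t-1)\binom{t}{s-1}<\binom{2t}{s}$ (even $c$) and $\binom{t+2}{s}+2t\binom{t}{s-1}<\binom{2t+1}{s}$ (odd $c$), then re-parametrizes by writing $n-t$ (resp.\ $n-t-2$) as $\alpha_1(c-1)+p_1$ and splits into two subcases according to whether the remainder overflows past $c-2$, reassembling $h_s(n,c)$ block by block. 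You avoid that second layer of case analysis entirely: keeping the original parameters $\alpha,p$, you decompose $h_s-f_s=(\alpha-1)X+g(p)$, use the first difference $g(p)-g(p-1)=\binom{p}{s-1}-\binom{m}{s-1}$ to locate the minimum at $p=m$, and evaluate there, reducing everything to the single inequality $X>\binom{m}{s-2}$ (odd) or $X>0$ (even) --- a weaker target than the paper's (1) and (3). Both proofs then rest on the same doubling estimate $\frac{c-i}{m-i+1}\ge 2$. Your version is more uniform and arguably cleaner; the paper's version needs no monotonicity step but pays with the remainder case split. I verified your identities $g(m)=X$ for $c=2m$ and $g(m)=X-\binom{m}{s-2}$ for $c=2m+1$, and the degenerate case $m<s-1$ is handled correctly.

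Two small points to tighten. First, you state the even-case target as ``$X\ge 0$,'' but since the bound you derive is only $h_s-f_s\ge(\alpha-1)X+g(m)=\alpha X$, you need strict positivity $X>0$; fortunately your product estimate gives $X\ge(c-1)\binom{m}{s-1}\bigl(2^{s-1}/s-1\bigr)>0$ whenever $\binom{m}{s-1}>0$, so only the wording needs fixing. Second, the odd-case absorption of $\binom{m}{s-2}$ is asserted ``with room,'' but the margin is thin at the extreme $m=s-1$: for $s=3$, $m=2$ your bound gives $\frac{2m+4}{3}=\frac{8}{3}$ against $\frac{s-1}{m-s+2}=2$, which passes but only just. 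That comparison, namely $\frac{(2m+1)2^{s-1}}{s}-2m>\frac{s-1}{m-s+2}$, should be written out and checked at $m=s-1$ for each $s\ge 3$ rather than waved through.
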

\begin{proof}
It is enough to prove that $f_s(n,c,2)<h_s(n,c)$ and $f_s(n,c,\lfloor c/2 \rfloor)<h_s(n,c).$
Let $n-1=\alpha(c-1)+p,$ where $\alpha\ge1$ and $0\le p\le c-2.$ Recall that $h_s(n,c)=\alpha\binom{c}{s}+\binom{p+1}{s}.$

We first prove $f_s(n,c,2)<h_s(n,c).$ Note that if $s\ge 4,$ the result hold. If $s=3,$ we have
\begin{align*}
h_3(n,c)-f_3(n,c,2)=&\alpha\binom{c}{3}+\binom{p+1}{3}-\binom{c-1}{3}-(n-c+1)\\
=&(\alpha-1)\left[\binom{c}{3}-(c-1)\right]+\binom{c-1}{2}+\binom{p+1}{3}-(p+1)>0.
\end{align*}

Now we will prove $f_s(n,c,\lfloor c/2 \rfloor)<h_s(n,c).$ Assume that $c$ is even and $c=2t$; then $F(n,c,t)=K_t\vee \overline{K_{n-t}}.$ We begin by prove the following inequality
\begin{equation}\label{1}
  \binom{t}{s}+(2t-1)\binom{t}{s-1}< \binom{2t}{s}.
\end{equation}

The two cases $s=3$ and $s\ge t+1$ can be easily check. So we assume $s\ge4$ and $t\ge s.$  Inequality (\ref{1}) is equivalent to
$$
\frac{t(t-1)...(t-s+1)}{s!}+\frac{s(2t-1)\cdot t(t-1)...(t-s+2)}{s!}< \frac{2t(2t-1)...(2t-s+1)}{s!}.
$$
Simplifying the above inequality, we obtain
\begin{equation}\label{2}
(2s+1)t-2s+1<\frac{2t}{t}\cdot \frac{2t-1}{t-1}...\frac{2t-s+2}{t-s+2}\cdot(2t-s+1).
\end{equation}

Since $\frac{2t-i}{t-i}\geq2$ for any $0\leq i\leq t-1$, the right hand side of (\ref{2}) is no less than $2^{s-1}(2t-s+1)$. Note also that $t\geq s$; thus it suffices to show
$$
(2^s-2s-1)s> s2^{s-1}-2^{s-1}-2s+1,
$$
which is equivalent to $(s+1)2^{s-1}> 2s^2-s+1$. Recall that $s\geq4$; thus it is sufficient to prove $5\cdot2^{s-1}> 2s^2-3$, which can be easily checked.

Let $n-t=\alpha_1(c-1)+p_1$ with $0\le p_1\le c-2,$  and we have $n-1=\alpha_1(c-1)+p_1+t-1.$
If $t+p_1-1\le c-2,$ we have $\alpha_1\ge1.$ Then
\begin{align*}
  f_s(n,c,t)&=\left[\alpha_1(c-1)+p_1\right]\binom{t}{s-1}+\binom{t}{s}\\
  &=\alpha_1(2t-1)\binom{t}{s-1}+\binom{t}{s}+p_1\binom{t}{s-1}\\
  &<\alpha_1\binom{2t}{s}+\binom{t+p_1}{s}=h_s(n,c),
\end{align*}
where the inequality follows from the inequality (\ref{1}) and the fact $\alpha_1\ge1.$ If $c-1\le t+p_1-1\le 3t-2,$ then
\begin{align*}
  f_s(n,c,t)&=\left[\alpha_1(c-1)+p_1\right]\binom{t}{s-1}+\binom{t}{s}\\
  &=\alpha_1(2t-1)\binom{t}{s-1}+p_1\binom{t}{s-1}+\binom{t}{s}\\
  &\le\alpha_1\binom{2t}{s}+(2t-2)\binom{t}{s-1}+\binom{t}{s}\\
  &<\alpha_1\binom{2t}{s}+\binom{2t}{s}+\binom{t+p_1-c+1}{s}=h_s(n,c),
\end{align*}
where the second inequality follows from the inequality (\ref{1}).

Assume that $c=2t+1$ is odd (the proof is similar to the case when $c$ is even); then $F(n,c,t)=K_t\vee(K_2+K_{n-t-2})$. We first prove the following inequality
\begin{equation}\label{3}
  \binom{t+2}{s}+2t\binom{t}{s-1}< \binom{2t+1}{s}.
\end{equation}
The case $s=3$ can be easily checked. So we assume $s\ge4.$  Note that
$$
\binom{t+2}{s}+2t\binom{t}{s-1}\le \binom{t+2}{s}+2t\binom{t+1}{s-1}.
$$
So, it suffices to prove
\begin{equation*}
 \binom{t+2}{s}+2t\binom{t+1}{s-1}<  \binom{2t+1}{s},
\end{equation*}
which is equivalent to
\begin{equation*}
\frac{(t+2)(t+1)\ldots(t-s+3)}{s!}+\frac{s(2t)(t+1)\ldots(t-s+3)}{s!}<
\frac{(2t+1)(2t)\ldots(2t-s+2)}{s!}.
\end{equation*}
Simplifying the above inequality, we obtain
\begin{equation}\label{4}
t+2+2ts<\frac{2t+1}{t+1}\cdot\frac{2t}{t}\cdot\frac{2t-1}{t-1}\cdots\frac{2t-s+3}{t-s+3}\cdot(2t-s+2).
\end{equation}
Since $\frac{2t-i}{t-i}>2$ for any $0\leq i\leq t-1$ and
$\frac{2t+1}{t+1}\cdot\frac{2t}{t}\cdot\frac{2t-1}{t-1}>8$, the right hand side of (\ref{4}) is no less than $2^{s-1}(2t-s+2)$. Note also that $t\geq s$; thus it suffices to show
$$
(2^s-2s-1)s>(s-2)\cdot2^{s-1}+2,
$$
which is equivalent to $(s+2)2^{s-1}> 2s^2+s+2$. Recall that $s\geq4$; thus it is sufficient to prove $6\cdot2^{s-1}> 2s^2+s+2$, which can be easily checked.

Let $n-t-2=\alpha_2(c-1)+p_2$ with $0\le p_2\le c-2,$  and we have $n-1=\alpha_2(c-1)+p_2+t+1.$
If $t+p_2+1\le c-2,$ we have $\alpha_2\ge1.$ Then
\begin{align*}
  f_s(n,c,t)&=\left[\alpha_2(c-1)+p_2\right]\binom{t}{s-1}+\binom{t+2}{s}\\
  &=\alpha_2\cdot2t\binom{t}{s-1}+\binom{t+2}{s}+p_2\binom{t}{s-1}\\
  &<\alpha_2\binom{2t+1}{s}+\binom{t+p_1+2}{s}=h_s(n,c),
\end{align*}
where the inequality follows from the inequality (\ref{3}) and the fact $\alpha_2\ge1.$ If $c-1\le t+p_2+1\le 3t-1,$ then
\begin{align*}
  f_s(n,c,t)&=\left[\alpha_2(c-1)+p_2\right]\binom{t}{s-1}+\binom{t+2}{s}\\
  &=\alpha_2\cdot2t\binom{t}{s-1}+p_2\binom{t}{s-1}+\binom{t+2}{s}\\
  &\le\alpha_2\binom{2t+1}{s}+(2t-1)\binom{t}{s-1}+\binom{t+2}{s}\\
  &<\alpha_2\binom{2t+1}{s}+\binom{2t+1}{s}+\binom{t+p_2-c+2}{s}=h_s(n,c),
\end{align*}
where the second inequality follows from the inequality (\ref{3}). This completes the proof of Lemma 8.
\end{proof}

Now we are ready to prove Theorem 5.

\noindent{\bf Proof of Theorem 5.}
Let $G$ be a connected $n$-vertex graph with $n\geq3$ and $c(G)\le c$, Suppose that $N_s(G)=h_s(n,c).$ Let $L(G,c)$ be the edge maximal graph with respect to $G$ and $c$. We will use induction on $n$. The result holds trivially for $n=3$ or $c=3$. So next we assume that $n\geq 4$ and $c\ge 4.$

First, we consider the case that $n\le c.$ Note that $N_s(G)=h_s(n,c)=\binom{n}{s}.$ If $s\le n,$ then $G$ is a complete graph $K_n.$ However, if $s\ge n+1,$ then $N_s(G)=\binom{n}{s}=0.$ Since $L(G,c)$ is edge-maximal with respect to $G$ and $c$, it follows that $L(G,c)$ is a complete graph. We then consider the case that $n\ge c+1.$ We divide the proof in the following two cases.

{\bf Case 1.} $s\le p+1.$ If $G$ is $2$-connected, by Theorem 7 and Lemma 8 we have
$$
N_s(G)\le {\rm max}\{f_s(n,c,2),f_s(n,c,\lfloor c/2 \rfloor)\}<h_s(n,c),
$$
which is a contradiction. Then $G$ has a cut-vertex $v.$ Let $H$ be a connected component of $G-v$, and let $G_1=G[H \cup  \{ v\} ]$ and $G_2=G-V(H)$. Then both $G_1$ and $G_2$ are connected and $G_1\cap G_2=\{v\}.$ For each $i\in [2]$, let $n_i=|V(G_i)|$, and assume
$n_i-1= \alpha_i(c-1)+p_i$ with $0\leq p_i\leq c-2$. Then $n=n_1+n_2-1$ and $N_s(G)=N_s(G_1)+N_s(G_2)$. For $s\ge 3,$ the following inequality holds:
$$
 \binom{p_1+1}{s}+\binom{p_2+1}{s}\le \begin{cases}
   \binom{p_1+p_2+1}{s},\qquad\qquad\quad\,\,{\rm if}\,\,\, p_1+p_2\le c-2;\\
   \binom{c}{s}+\binom{p_1+p_2-c+2}{s},\qquad {\rm if}\,\, c-1\le p_1+p_2\le 2(c-2).
 \end{cases}
$$
Equality holds if and only if $p_1+p_2\le c-2$ and $p_1=0$ or $p_2=0.$ Note that
\begin{align*}
N_s(G)&=N_s(G_1)+N_s(G_2)\le h_s(n_1,c)+h_s(n_2,c)\\
&=\alpha_1\binom{c}{s}+\binom{p_1+1}{s}+\alpha_2\binom{c}{s}+\binom{p_2+1}{s}\\
&\le
\begin{cases}
   (\alpha_1+\alpha_2)\binom{c}{s}+\binom{p_1+p_2+1}{s}\qquad\qquad\,\,\,\,{\rm if}\,\,\, p_1+p_2\le c-2;\\
   (\alpha_1+\alpha_2+1)\binom{c}{s}+\binom{p_1+p_2-c+2}{s}\qquad {\rm if}\,\, c-1\le p_1+p_2\le 2(c-2).
 \end{cases}\\
&=h_s(n,c).
\end{align*}
Since $N_s(G)=h_s(n,c),$ we deduce that $p_1+p_2\le c-2$ and $p_1=0$ or $p_2=0.$ Without loss of generality, we assume that $p_1=0.$ Applying the inductive hypothesis to $G_1$ and $G_2,$ we can conclude that $G_1$ consists of $\alpha_1$ copies of $K_c$ and $G_2$ consists of $\alpha_2$ copies of $K_c$ and one $K_{p_2+1}$. Since $n-1=(\alpha_1+\alpha_2)(c-1)+p_2,$ we have $G=H_{n,c}.$

{\bf Case 2.} $s\ge p+2.$ We consider the graph $L(G,c).$ Note that adding edges to $G$ does not reduce the number of $s$-cliques. Then $N_s(G)=N_s(L(G,c))=h_s(n,c).$ If $L(G,c)$ is $2$-connected, we will get a contradiction by Theorem 7 and Lemma 8. Then $L(G,c)$ has a cut-vertex $v.$ Similar to Case 1, we define $G_1$ and $G_2$ by considering the graph $L(G,c).$ Since $L(G,c)$ is edge maximal graph with respect to $G$ and $c.$ We deduce that $G_1$ and $G_2$ are also edge maximal. For $s\ge 3,$ the following inequality holds:
$$
 \binom{p_1+1}{s}+\binom{p_2+1}{s}\le \begin{cases}
   \binom{p_1+p_2+1}{s}=0\qquad\,\,\,{\rm if}\,\,\, p_1+p_2\le c-2;\\
   \binom{c}{s}+\binom{p_1+p_2-c+2}{s}\quad {\rm if}\,\, c-1\le p_1+p_2\le 2(c-2).
 \end{cases}
$$
Equality holds if and only if $p_1+p_2\le c-2.$  Applying the inductive hypothesis to each $G_i$, we have
\begin{align*}
N_s(L(G,c))&=N_s(G_1)+N_s(G_2)\le h_s(n_1,c)+h_s(n_2,c)\\
&=\alpha_1\binom{c}{s}+\binom{p_1+1}{s}+\alpha_2\binom{c}{s}+\binom{p_2+1}{s}\\
&\le
\begin{cases}
   (\alpha_1+\alpha_2)\binom{c}{s}+\binom{p_1+p_2+1}{s}\qquad\qquad\,\,{\rm if}\,\,\, p_1+p_2\le c-1\\
   (\alpha_1+\alpha_2+1)\binom{c}{s}+\binom{p_1+p_2-c+2}{s}\qquad {\rm if}\,\, c\le p_1+p_2\le 2(c-2)
 \end{cases}\\
&=h_s(n,c)
\end{align*}
Since $N_s(L(G,c))=h_s(n,c),$ we deduce that $p_1+p_2\le c-2.$ Note that $s\ge p_1+p_2+2.$ we have $s\ge p_1+2$ and $s\ge p_2+2$. Then $G_i,$ $i=1,2$ is a connected graph consisting of $\alpha_i$ blocks, each of which is $K_c$, while the remaining blocks are complete graphs. Then $L(G,c)$ is a connected graph consisting of $\alpha$ blocks, each of which is $K_c$, while the remaining blocks are complete graphs. \hfill $\Box$

\begin{defi}
For a given edge $uv$ in $G$, an {\it edge-switching} from $v$ to $u$ is to replace each edge $vw$ by a new edge $uw$ for every $w\in N(v)\setminus N[u]$. The resulting graph is called the {\it edge-switching graph} of $G$ from $v$ to $u$, denoted by $G[v\rightarrow u]$.
\end{defi}

Let $ux$ be an edge of $G$ and suppose that $\{u,x\}$ is not a vertex cut of $G$.  By Lemma 2.3 and Lemma 2.5 in \cite{5}, we have that if $N(u)\cap N(x)=\emptyset,$ then the graph $G/ux$ is $2$-connected and  $N_s(G/ux)\ge N_s(G)$ for $s\ge 3.$ If $N(u)\cap N(x)\neq\emptyset,$ then $G[x\rightarrow u]$ is $2$-connected and $N_s(G[x\rightarrow u])\ge N_s(G).$ Let $v$ be a neighbor of $u$ in $G$ and $v\neq x$. If $c_{uv}(G)\le k,$ by Lemma 2.4 of \cite{5}, we have $c_{uv}(G[x\rightarrow u])\le k.$

\begin{lem}
For $n\ge k+1,$ let $G$ be a $2$-connected $n$-vertex graph with $N_s(G)=g_s(n,k),$ and let $\{v,x\}$ be two neighbors of $u$ in $G$ with $c_{uv}(G)\leq k$ and $N_G(u)\cap N_G(x)=\emptyset.$ If $N_s(G/ux)=N_s(G)$ and $M(G/ux,uv)=X_{n-1,k},$ then $\{u,v\}$ is a $2$-vertex cut of $G.$
\end{lem}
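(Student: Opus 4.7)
The plan is to pin down the structure of $G/ux$ from the extremality hypothesis and then identify a $2$-cut of $G$ explicitly.

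First I would apply Theorem 4 to $G/ux$: it is $2$-connected on $n-1$ vertices (by the remarks preceding the lemma), and the contracted edge $\hat{u}v$, where $\hat{u}$ is the vertex obtained by identifying $u$ and $x$, satisfies $c_{\hat{u}v}(G/ux)\le c_{uv}(G)\le k$ by a standard lifting argument: any cycle through $\hat{u}v$ in $G/ux$ lifts, after possibly re-inserting $x$ between $u$ and a neighbor lying in $N_G(x)\setminus N_G(u)$, to a cycle through $uv$ in $G$ of at least the same length. Thus $N_s(G/ux)\le g_s(n-1,k)$. Combined with the hypotheses $N_s(G/ux)=N_s(G)=g_s(n,k)$ and the elementary monotonicity $g_s(n,k)\ge g_s(n-1,k)$, equality holds throughout, so $N_s(G/ux)=N_s(X_{n-1,k})$.

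Next, since $G/ux\subseteq M(G/ux,uv)=X_{n-1,k}$ and the two graphs have the same number of $s$-cliques, the sets of $s$-cliques coincide. Write $X_{n-1,k}=K_2\vee(\beta'K_{k-2}+K_{q'})$ with the $K_2$ playing the role of $\{\hat{u},v\}$, and let $V_1,\ldots,V_{\beta'},V_{\beta'+1}$ be the vertex sets of the $\beta'$ copies of $K_{k-2}$ and of $K_{q'}$. For each $i\le\beta'$ the set $\{\hat{u},v\}\cup V_i$ spans $K_k$ in $X_{n-1,k}$, so every $s$-subset of it is an $s$-clique of $X_{n-1,k}$; since these must also be $s$-cliques of $G/ux$, the set $\{\hat{u},v\}\cup V_i$ spans $K_k$ in $G/ux$ as well.

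Now set $A=N_G(u)\setminus\{x,v\}$ and $B=N_G(x)\setminus\{u\}$; these are disjoint because $N_G(u)\cap N_G(x)=\emptyset$, and every $w\in V_i$ with $i\le\beta'$ lies in $A\cup B$ since $w$ is adjacent to $\hat{u}$ in $G/ux$. Splitting $s$-cliques according to whether they contain $\hat{u}$ (in $G/ux$) or $u,x$ (in $G$) yields the identity
\[ N_s(G/ux)-N_s(G)=\#\bigl\{(s-1)\text{-cliques of } G[A\cup B\cup\{v\}] \text{ meeting both } A\cup\{v\} \text{ and } B\bigr\}, \]
which must equal $0$ by hypothesis. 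For each $i\le\beta'$, $G[V_i\cup\{v\}]=K_{k-1}$ is a clique of $G[A\cup B\cup\{v\}]$ containing $v\in A\cup\{v\}$; the number of crossing $(s-1)$-subsets drawn from it vanishes only when $V_i\subseteq A$ or $|V_i|\le s-3$, and the latter is impossible since $|V_i|=k-2\ge s-2$. Hence $V_i\subseteq A$ for every $i\le\beta'$.

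To finish, each such $V_i$ spans $K_{k-2}$ in $G$ whose only external edges go to $u$ and $v$: there is no edge to $x$ because $V_i\cap N_G(x)=\emptyset$, and no edge to any other $V_j$ or to $V_{\beta'+1}$ because $X_{n-1,k}$ contains no such edges and $G/ux\subseteq X_{n-1,k}$. Consequently each $V_i$ with $i\le\beta'$ is a connected component of $G-u-v$. The hypothesis $n\ge k+1$ forces $\beta'\ge 1$, while $x$ itself lies in a component distinct from all the $V_i$'s, so $G-u-v$ is disconnected and $\{u,v\}$ is a $2$-vertex cut of $G$. The main delicate point is the clique-count identity together with the combinatorial argument that locks every $V_i$ inside $A$.
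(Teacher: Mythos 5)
Your proposal is correct, and while it sets up the same structural information as the paper (namely that $G/ux$ is a spanning subgraph of $X_{n-1,k}$ with the same $s$-cliques, so each block $\{\hat u,v\}\cup V_i$ is a $K_k$ in $G/ux$), the decisive step is genuinely different. The paper argues by contradiction using the hypothesis $c_{uv}(G)\le k$: if $\{u,v\}$ were not a cut, then $x$ would have a neighbor $w_1$ in some block $V_i$, and the cycle $v\,u\,x\,w_1\cdots w_{k-2}\,v$ through the $(k-1)$-clique $V_i\cup\{v\}$ would have length $k+1$, contradicting $c_{uv}(G)\le k$. You instead exploit the hypothesis $N_s(G/ux)=N_s(G)$ directly via the exact contraction identity
\[
N_s(G/ux)-N_s(G)=\#\bigl\{(s-1)\text{-cliques of }G[A\cup B\cup\{v\}]\text{ meeting both }A\cup\{v\}\text{ and }B\bigr\},
\]
which is correct for $s\ge 3$ since $N_G(u)\cap N_G(x)=\emptyset$ kills all $s$-cliques through both $u$ and $x$; the vanishing of the right-hand side, together with $|V_i\cup\{v\}|=k-1\ge s-1$, forces $V_i\subseteq N_G(u)$, whence each $V_i$ is a full component of $G-u-v$ and $x$ lies outside all of them. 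Your route buys a cleaner, purely counting-based argument that never constructs a long cycle and yields slightly stronger structural output (the components of $G-u-v$ are identified explicitly); the paper's route is shorter once the blocks are in place but leans on a Hamiltonian-path construction inside $T_i=G[V(B_i)\cup\{x\}]$. Both correctly use $n\ge k+1$ to guarantee $\beta'\ge 1$, and your lifting argument showing $c_{\hat uv}(G/ux)\le c_{uv}(G)$ (re-inserting $x$ between $u$ and a neighbor in $N_G(x)\setminus N_G(u)$) is sound and makes explicit a point the paper leaves implicit.
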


\noindent{\bf Proof.} Let $G'=M(G/ux).$ Since $c_{uv}(G)\le k,$ we can conclude that the length of a longest cycle containing $uv$ in $G'$ is at most $k,$ i.e.,  $c_{uv}(G')\le k.$ It is worth noting that $N_s(G')\le g_s(n-1,k)=N_s(X_{n-1,k})$ and $N_s(G')=N_s(G)=g_s(n,k).$  Therefore, we have $N_s(G')=g_s(n-1,k)=N_s(X_{n-1,k}).$ Suppose $(n-1)-2=\beta(k-2)+q, $ where $\beta\ge 1$ and $0\le q\le k-3.$ By the definition of $M(G',uv),$ the graph $G'$ is a spanning subgraph of $X_{n-1,k}.$ Let $B'_i,$ $1\le i\le\beta,$ be the connected component of $X_{n-1,k}-\{u,v\}$ with $k-2$ vertices. Furthermore, define $B_i=X_{n-1,k}[V(B'_i)\cup\{u,v\}]$, $1\le i\le \beta.$ Then $V(B_i)$ forms a clique with $k$ vertices. Since the edges in $B_i,$ $1\le i\le\beta$ are contained in some $K_s$ in $X_{n-1,k}$ and $N_s(G')=N_s(X_{n-1,k}),$ we conclude that $G'$ contains $B_i,$ $1\le i\le\beta$. Let us denote by $B_0=G'-\sum_{i=1}^{\beta}B'_i$ and $B'_0=B_0-\{u,v\}.$

Suppose that $\{u,v\}$ is not a $2$-vertex cut in $G$. In this case, we have $N_G(x)\cap V(B'_i)\neq\emptyset, 0\le i\le \beta.$  We then consider the graph $T_i=G[V(B_i)\cup x],$ $0\le i\le \beta.$ Obviously, the graph $T_i, 1\le i\le \beta$ contains a $(k-1)$-clique $V(B_i')\cup \{v\}.$ Recall that $N_G(u)\cap N_G(x)=\emptyset.$ Thus, we have $N_{T_i}(u)\cup N_{T_i}(x)=V(B_i')\cup \{v\}$ and $N_{T_i}(u)\cap N_{T_i}(x)=\emptyset,$ $1\le i\le \beta.$ It is easy to verify that there exists a cycle of length $k+1$ that contains $uv$ in $T_i,$ where $1\le i\le \beta,$  a contradiction to $c_{uv}(G)\le k.$ This completes the proof of Lemma 9.
\hfill $\Box$

\begin{lem}
For $n\ge k+1,$ let $G$ be a $2$-connected $n$-vertex graph with $N_s(G)=g_s(n,k),$ and let $\{v,x\}$ be two neighbors of $u$ in $G$ with $c_{uv}(G)\leq k$ and $N_G(u)\cap N_G(x)\neq\emptyset.$ If $N_s(G[x\rightarrow u])=N_s(G)$ and $M(G[x\rightarrow u],uv)=X_{n,c},$  then $\{u,v\}$ is a $2$-vertex cut of $G.$
\end{lem}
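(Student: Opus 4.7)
The plan is to adapt the proof of Lemma 9, replacing the contraction analysis by the corresponding edge-switching analysis. I would first set $G' = M(G[x \to u], uv)$, which equals $X_{n,k}$ by hypothesis, so that $G[x \to u] \subseteq X_{n,k}$. In $X_{n,k} = K_2 \vee (\beta K_{k-2} + K_q)$ the unique edge $e$ with $c_e(X_{n,k}) \le k$ is the one joining the two universal vertices of the $K_2$ part, so I may identify $u$ and $v$ with those universal vertices. Let $V(B'_1), \ldots, V(B'_\beta)$ denote the blocks of size $k-2$, $V(B'_0)$ the $K_q$ part, and $B_i = X_{n,k}[V(B'_i) \cup \{u,v\}]$. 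The hypothesis $N_s(G[x \to u]) = N_s(X_{n,k})$ combined with $G[x \to u] \subseteq X_{n,k}$ forces every $s$-clique of $X_{n,k}$ to appear in $G[x \to u]$, and since $|V(B_i)| = k \ge s$ makes every edge of $B_i$ part of some $s$-clique, this yields $B_i \subseteq G[x \to u]$ for $1 \le i \le \beta$.

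Next I would extract two structural facts about $G$ itself. Since edges not incident to $u$ or $x$ are unaffected by the switching, $V(B'_i)$ induces a $K_{k-2}$ in $G$ and $v$ is adjacent in $G$ to every vertex of $V(B'_i)$ for $1 \le i \le \beta$. Moreover, letting $j$ be the index with $x \in V(B'_j)$, any common neighbor $y \in N_G(u) \cap N_G(x)$ satisfies $y \in N_G[u]$, so the edge $xy$ is preserved by the switching and $y$ is an $X_{n,k}$-neighbor of $x$, forcing $y \in \{v\} \cup (V(B'_j) \setminus \{x\})$. Consequently, for $i \neq j$ and $w \in V(B'_i)$, the inclusion $uw \in E(B_i) \subseteq E(G[x \to u])$ implies that at least one of $uw \in E(G)$ or $xw \in E(G)$ holds (a $u$-edge in $G[x \to u]$ comes either from $G$ directly or from a transferred $x$-edge of $G$), and only one can hold (otherwise $w$ would be a common neighbor outside $\{v\} \cup V(B'_j)$), yielding a disjoint partition $V(B'_i) = P_i \sqcup Q_i$ with $P_i = V(B'_i) \cap N_G(u)$ and $Q_i = V(B'_i) \cap N_G(x)$.

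I would then suppose, toward a contradiction, that $\{u, v\}$ is not a $2$-vertex cut of $G$, so $G - \{u, v\}$ is connected. Every edge of $G - \{u, v\}$ not incident to $x$ coincides with an edge of $G[x \to u] - \{u, v\} \subseteq X_{n,k} - \{u, v\}$ and hence lies inside a single $V(B'_i)$; since the only cross-block links in $G - \{u, v\}$ are therefore $x$-edges, connectivity forces $N_G(x) \cap V(B'_i) \neq \emptyset$, and in particular $Q_i \neq \emptyset$, for every $i \neq j$ with $V(B'_i) \neq \emptyset$. Picking some $i \neq j$ with $|V(B'_i)| = k-2$, choosing $w^* \in Q_i$, and ordering the remaining vertices of $V(B'_i)$ as $w_1, \ldots, w_{k-3}$, all of the adjacencies $uv$, $vw_1$, $w_\ell w_{\ell+1}$ (for $1 \le \ell \le k-4$), $w_{k-3} w^*$, $w^* x$, and $xu$ belong to $G$, so
\[
u,\; v,\; w_1,\; w_2,\; \ldots,\; w_{k-3},\; w^*,\; x,\; u
\]
is a cycle of length $k+1$ through $uv$ in $G$, contradicting $c_{uv}(G) \le k$.

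The main obstacle is that edge-switching genuinely changes the $u$-neighborhood, so structural information about $G[x \to u]$ does not transfer directly to $G$; one must leverage the fact that the switching only redirects $x$-incident edges in order to rebuild enough $G$-edges for the final cycle, and the clean partition $V(B'_i) = P_i \sqcup Q_i$ coming from the common-neighbor constraint is precisely what makes this possible. A minor technical case is $\beta = 1$ with $x \in V(B'_1)$, when no alternate $K_{k-2}$ block exists; here I would instead build the cycle from $V(B'_0) \cup (V(B'_1) \setminus \{x\})$, using a $G$-neighbor of $x$ in $V(B'_0)$ supplied by connectivity and a $v$-edge into $V(B'_0)$ coming either from $B_0 \subseteq G[x \to u]$ (when $q+2 \ge s$) or from the $2$-connectivity of $G[x \to u] \subseteq X_{n,k}$ (when $q+2 < s$).
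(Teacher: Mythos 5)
Your main case is correct, and in fact cleaner than the paper's argument: the partition $V(B'_i)=P_i\sqcup Q_i$ coming from the common-neighbor constraint, together with connectivity of $G-\{u,v\}$, produces an explicit $(k+1)$-cycle $u,v,w_1,\dots,w_{k-3},w^*,x,u$ whenever some full block $V(B'_i)$ with $i\neq j$ exists. All the individual adjacency claims there check out.

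The gap is in your ``minor technical case'' $\beta=1$ with $x\in V(B'_1)$, and it is not minor: this includes $n=k+1$, the base configuration where the lemma is actually invoked. With only the edges you have established ($uv$, $ux$, $v$ adjacent to all of $V(B'_1)$ and to some $z\in V(B'_0)$, $x$ adjacent to some $x'\in V(B'_0)$, the clique on $V(B'_1)$, and whatever survives of $V(B'_0)$), the only known $G$-neighbors of $u$ are $v$ and $x$. Hence every cycle through $uv$ must have the form $u\hbox{--}v\hbox{--}P\hbox{--}x\hbox{--}u$ with $P$ a path in $G-\{u,v,x\}$; but $G-\{u,v,x\}$ splits into the clique $V(B'_1)\setminus\{x\}$ (at most $k-3$ vertices) and the components of $G[V(B'_0)]$ (at most $q\le k-3$ vertices), with no edges between them, so $P$ stays in one piece and the cycle has length at most $k$. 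Your proposed route through $V(B'_0)\cup(V(B'_1)\setminus\{x\})$ therefore cannot be closed. The missing ingredient is the observation the paper uses at the corresponding point (``$G$ contains $Q_1$ as a subgraph''): edge-switching deletes exactly the edges from $x$ to $N_G(x)\setminus N_G[u]$, so every edge $xw$ with $w\in V(B'_1)\setminus\{x\}$ that survives into $G[x\to u]$ (and all of them survive, since $B_1\subseteq G[x\to u]$) forces $w\in N_G(u)$. Thus the entire clique $B_1$ on $V(B'_1)\cup\{u,v\}$ already sits inside $G$, and the cycle $u,w_1,\dots,w_{k-3},x,x',\dots,z,v,u$ through $uv$ has length at least $k+1$. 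With that one structural fact added, your argument closes; without it, the exceptional case is unproved.
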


\noindent{\bf Proof.} Let $Q=G[x\rightarrow u].$ Since $M(Q,uv)=X_{n,k}$ and $N_s(Q)=g_s(n,k).$ The vertex set $\{u,v\}$ is a $2$-vertex cut of $Q.$ Suppose $(n-2)=\alpha(k-2)+p,$ where $\alpha\ge 1$ and $0\le p \le k-3.$ Let $Q'_i,$ $1\le i\le\beta,$ be the connected component of $X_{n-1,k}-\{u,v\}$ with $k-2$ vertices. Furthermore, define $Q_i=X_{n-1,k}[V(Q'_i)\cup\{u,v\}]$, $1\le i\le \beta.$ Then $V(Q_i)$ forms a clique with $k$ vertices. Since the edges in $Q_i,$ $1\le i\le\beta$ are contained in some $K_s$ in $X_{n-1,k}$ and $N_s(Q)=N_s(X_{n,k}),$ we conclude that $Q$ contains $Q_i,$ $1\le i\le\beta$. Let us denote by $Q_0=Q-\sum_{i=1}^{\beta}Q'_i$ and $Q'_0=Q_0-\{u,v\}.$ Suppose that $\{u,v\}$ is not a $2$-vertex cut in $G$. We divide the proof in the following two cases.

{\bf Case 1.} If $x\in V(Q_i'),$ $1\le i\le \alpha,$ without loss of generality, we assume that $x\in V(Q_1').$ Then there exist some vertices in $Q'_i,$ $i=0,2,\ldots,\alpha$ adjacent to $x$ in $G.$ Since $N_{Q_1}(x)=N_{Q_1}(u)$ and by the definition of edge-switching, we conclude that the graph $G$ contains $Q_1$ as a subgraph. We then consider the graphs $H_i=G[V(Q_i)\cup \{x\}],$ $i=0,2,\ldots,\alpha$ in $G.$  Note that $Q_i$ is $2$-connected in $Q,$ and by de definition of edge-switching, we have that graphs $H_i,$ $i=0,2,\ldots,\alpha$ are also $2$-connected.

Since $n\ge k+1,$ there exists an $i\in\{0,2,\ldots,\alpha\}$ such that $Q'_i$ is not an empty graph. Without loss of generality, we assume that $i=0.$ Note that $\{u,v\}$ is not a $2$-vertex cut in $G$. The vertex $x$ has neighbors in $Q'_0$. Denote by $x'$ one of the neighbors of $x$ in $Q_0.$ For the edges $uv$ and $xx',$ it follows from the $2$-connectivity of $H_0$ that $H_0$ has a cycle $C_0$ containing both $uv$ and $xx'.$ Then there exists a path $P_1$ in $C_0$ connecting $x'$ and $v$ such that $x$ and $u$ are not in $V(P_1),$ or there exists a path $P_2$ in $C_0$ connecting $x'$ and $u$ such that $x$ and $v$ are not in $V(P_2).$ If $P_1$ exists, let $C_1'$ be a hamiltonian cycle of $H_1-v$ such that $xu\in E(C_1').$ Then
$$
c_{uv}(G)\ge ||E(C_1')-xu+uv+E(P_1)+xx'||\ge k-1-1+1+1+1=k+1.
$$
If $P_2$ exists, let $C_1''$ be a hamiltonian cycle of $H_1-u$ which contains $xv.$ Then
$$
c_{uv}(G)\ge ||E(C_1'')-xv+xx'+E(P_2)+uv||\ge k-1-1+1+1+1=k+1.
$$
Then there exists a cycle of length $k+1$ that contains $uv.$ a contradiction to $c_{uv}(G)\le k.$

{\bf Case 2.} If $x\in B_0',$ in this case, we define $H_i=G[V(Q_i)\cup\{x\}],$ $1\le i\le \alpha$ and $H_0=G[V(Q_i)].$ By the definition of edge-switching, we have $N_{H_1}(x)\cup N_{H_1}(u)=V(H_1),$ $N_{H_1}(x)\cap N_{H_1}(u)=\emptyset$ and $V(H_1)-\{x,u\}$ is a clique. It can be check that every edge of $H_1$ contained in a cycle in $H_1$ with length at least $k+1.$ Thus, we have that $c_{uv}(G)\ge k+1,$ a contradiction. This completes the proof of Lemma 10\hfill $\Box$

Now we are ready to prove Theorem 6.

\noindent{\bf Proof of Theorem 6.} Let $G$ be a $2$-connected graph on $n$ vertices and let $G$ contain an edge $e$ with $c_e(G)\le k.$ We will use induction on $n.$ The result holds trivially for $n=3$ or $k=3$. Thus we assume that $n\ge4$ and $k\ge4.$ Note that if $n\le k,$ the result holds trivially. therefore, we assume $n\ge k+1.$ We divide the proof in the following two cases.

{\bf Case 1.} The graph $G$ has a $2$-vertex cut $\{x,y\}$ such that $xy$ is an edge.

Let $W$ be a connected component of $G-\{x,y\}$. Furthermore, let $G_1=G[V(W)\cup
\{x, y\}]$ and $G_2 =G-V(W)$. Then both $G_1$ and $G_2$ are $2$-connected. For convenience,
let $|V(G_i)|=n_i\geq3$, $n_i-2=\beta_i(k-2)+q_i$ and $0\leq q_i \leq k-3$ for $i\in  [2]$.

If for any edge $e_1\in E(G_1),$ we have $c_{e_1}(G_1)\ge k+1.$  Let $e_2$ be an edge in $E(G_2)-\{xy\}.$ Since $G_2$ is $2$-connected, $G_2$ has a cycle $C_2$ containing $e_2$ and $xy.$ Note that $c_{xy}(G_1)\ge k+1.$ $G_1$ has a cycle $C_1$ with length at least $c+1$ containing $xy.$ Then the cycle $C_1+C_2-\{xy\}$ containing $e_2$ and has length at least $k+1.$ Thus, $c_e(G)\ge k+1$ for any edge in $G,$ a contradiction. Hence $G_1$ has an edge $e_1$ such that $c_{e_1}(G_1)\le k.$ By similar discussions, there exists an edge $e_2$ such that $c_{e_2}(G_2)\le k.$  For $s\ge3$, the following inequality holds:
\begin{equation*}\label{5}
 \binom{q_1+2}{s}+\binom{q_2+2}{s}
  \le
\begin{cases}
   \binom{q_1+q_2+2}{s},\qquad\qquad\,\,{\rm if}\,\,\, q_1+q_2\le k-3;\\
   \binom{k}{s}+\binom{q_1+q_2-k+4}{s},\,\,\,\,\,\, {\rm if}\,\, k-2\le q_1+q_2\le 2(k-3).
 \end{cases}\\
\end{equation*}
If $s\le q_1+q_2+2,$ equality holds if and only if $q_1+q_2\le k-3$ and $q_1=0$ or $q_2=0.$ If $s\ge q_1+q_2+3,$ equality holds if and only if $q_1+q_2\le k-3.$ Note that
\begin{align*}
  N_s(G)&=N_s(G_1)+N_s(G_2)\le g_s(n_1,k)+g_s(n_2,k)\\
  &=\beta_1\binom{k}{s}+\binom{q_1+2}{s}+\beta_2\binom{k}{s}+\binom{q_2+2}{s}\\
  &=(\beta_1+\beta_2)\binom{k}{s}+\binom{q_1+2}{s}+\binom{q_2+2}{s}\\
  &\le
\begin{cases}
   (\beta_1+\beta_2)\binom{k}{s}+\binom{q_1+q_2+2}{s}\qquad\qquad\,\,{\rm if}\,\,\, q_1+q_2\le k-3;\\
   (\beta_1+\beta_2+1)\binom{k}{s}+\binom{q_1+q_2-c+4}{s}\qquad {\rm if}\,\, k-2\le q_1+q_2\le 2(k-3).
 \end{cases}\\
 &=g_s(n,k).
\end{align*}
Since $N_s(G)=g_s(n,k),$ we have $N_s(G_i)=g_s(n_i,k),i=1,2$ and $q_1+q_2\le k-3.$ Note that $xy\in E(G_1)\cap E(G_2).$ We have $C_{xy}(G_1)\le k$ or $C_{xy}(G_2)\le k.$  Without loss of generality, we assume that $C_{xy}(G_1)\le k.$

Since $q_1+q_2\le k-3$ and $n\ge k+1,$ there exists an $i\in \{1,2\}$ such that $\beta_i\ge 1.$ By the inductive hypothesis, we have $M(G_i,e_i)=K_2\vee(\beta_iK_{k-2}+K_{q_i}),$ where $e_i$ is an edge of $G_i$ with $c_{e_i}(G_i)\le k,$ $i=1,2.$ Then $G_i$ is a subgraph of $K_2\vee(\beta_iK_{k-2}+K_{q_i}),$ $i=1,2.$ Since $N_s(G_i)=g_s(n_i,k),$ we have that $K_2\vee(\beta_iK_{k-2})$ is a subgraph of $G_i,$ $i=1,2.$ Let $ab$ denote $K_2$ of $K_2\vee(\beta_2K_{k-2})$ in $G_2.$ If $\beta_1=0.$ We can deduce that $xy\in E(G_2)-E(K_2\vee(\beta_2K_{k-2}))+\{ab\}.$  Otherwise, we will get a contradiction that every edge of $G$ is covered by a cycle of length at least $k+1.$ Then $M(G,e)=X_{n,k}.$ If $\beta_1\ge 1,$ then $c_{xy}(G_2)\le k.$ Otherwise, we can deduce that every edge of $G$ is covered by a cycle of length at least $k+1,$ a contradiction. Then we have $M(G,e)=X_{n,k}.$



{\bf Case 2.} The graph $G$ does not have a $2$-vertex cut $\{x,y\}$ such that $xy$ is an edge.

In this case, we assume that $G$ is a $2$-connected $n$-vertex graph with $N_s(G)=g_s(n,k),$ and $G$ does not have a $2$-vertex $\{x,y\}$ such that $xy$ is an edge. Denote
$$
\xi(G)= {\rm max}\,\{d_G(u)|\,u\,\,{\rm is}\,\,{\rm an}\,\,{\rm end}\,\,{\rm vertex}\,\,\,{\rm of}\,\,{\rm some}\,\,{\rm edge}\,\, e\,\,{\rm with}\,\, c_e(G)\le k\}.
$$
Among all the graphs $G$, choose $Q$ such that $\xi(G)$ is as large as possible.  Now, we will proceed to demonstrate that such a graph $Q$ cannot exist. Choose an edge $uv$ of $Q$ such that $c_{uv}(Q)\le k$ and $d_Q(v)\leq d_Q(u)=\xi (Q)$.

{\bf Subcase 2.1.} The vertex $u$ is a dominating vertex of $Q.$

By the Claim 3 of proof of Theorem 5 in \cite{5}, we have that the circumference of $Q-u$ is no more than $k-1$. Suppose $(n-1)-1=\alpha(k-2)+p$ and $0\le p\le k-3.$ Then
\begin{align*}
 N_s(Q)&=N_s(Q-u)+N_{s-1}(G-u)\\
 &\le \alpha\binom{k-1}{s}+\binom{p+1}{s}+ \alpha\binom{k-1}{s-1}+\binom{p+1}{s-1}\\
 &=\alpha\binom{k}{s}+\binom{p+2}{s}
 =g_s(n,k),
\end{align*}
equality holds if and only if $N_s(Q-u)=\alpha\binom{k-1}{s}+\binom{p+1}{s}$ and $N_{s-1}(G-u)=\alpha\binom{k-1}{s-1}+\binom{p+1}{s-1}.$ By Theorem 5, $Q-u$ has a cut vertex $x$. Then $ux$ is an edge and $\{u,x\}$ is a vertex cut of $Q,$  a contradiction.

{\bf Subcase 2.2.} The vertex $u$ is not a dominating vertex of $Q.$

Let $p=uu_1\ldots u_k$ be defined as in the Claim 2 of proof of Theorem 5 in \cite{5}. If $N(u)\cap N(u_1)=\emptyset,$ let $Q'=Q/uu_1.$ Since $\{u,u_1\}$ is not a $2$-vertex cut. We have that $Q'$ is $2$-connected, $c_{uv}(Q')\le k$ and $N_s(Q')=g_s(n,k).$ By the inductive hypothesis, we have $M(Q',uv)=X_{n-1,k}.$ By Lemma 9, we can deduce that $\{u,v\}$ is a 2-vertex cut of $G$ and $uv$ is an edge, a contradiction.

If $N(u)\cap N(u_1)\neq\emptyset,$ let $Q''=Q[u_1\rightarrow u].$ We have that $Q''$ is $2$-connected, $c_{uv}(Q'')\le k$ and $N_s(Q'')=g_s(n,k).$ If $Q''$ have a vertex cut $\{x,y\}$ such that $xy$ is an edge of $Q''.$ By Case 1, we have $M(G'',uv)=X_{n,k}.$ Then $Q$ has a $2$-vertex cut $\{u,v\}$ by Lemma 10, a contradiction. If $Q''$ does not have an edge $xy$ such that $\{x,y\}$ is a vertex cut. Then $\xi(Q'' )\geq d_{Q''}(u)>d_Q(u)=\xi (Q)$, which contradicts the maximality of $\xi (Q)$. \hfill $\Box$

\vskip 5mm
{\bf Acknowledgement.} The author is grateful to Professor Xingzhi Zhan for his constant support and guidance. This research was supported by the NSFC grant 12271170 and Science and Technology Commission of Shanghai Municipality (STCSM) grant 22DZ2229014.

\end{document}